\theoremstyle{definition}
\newtheorem{definition}{Definition}
\newtheorem{notation}{Notation}
\newtheorem{move}{Move}
\theoremstyle{theorem}
\newtheorem{theorem}{Theorem}
\newtheorem{proposition}{Proposition}
\newtheorem{question}{Question}
\newtheorem{fact}{Fact}
\theoremstyle{remark}
\newcommand{\ri}{\operatorname{RI}}
\begin{document}
\title[Plumbing and computation of crosscap number]{Plumbing and computation of crosscap number}
\author{Noboru Ito}
\address{866 Nakane, Hitachinaka, Ibaraki, 312-8508, Japan}
\email{nito@ibaraki-ct.ac.jp}
\author{Kaito Yamada}
\address{866 Nakane, Hitachinaka, Ibaraki, 312-8508, Japan}
\email{nanigasi.py@gmail.com}
\keywords{plumbing; crosscap number; arborescent knots; computation}
\date{May 30, 2021}
\thanks{MSC2020: 57K10}
\keywords{knot; spanning surface; plumbing; crosscap number; programming}
\maketitle
\begin{abstract}
We introduce a ``deformation" of  plumbing. 
We also define a structure of data used in a calculation by computer aid of the crosscap numbers of alternating knots.    
\end{abstract}
\section{Introduction}   
The recent paper \cite{ItoTakimura2018} introduces an unknotting-type number $u^-(K)$ of a knot $K$.   It is known that $u^-(K)$ equals the crosscap number $C(K)$ for every prime alternating knot $K$ \cite{ItoTakimura2019, ItoTakimura2020,  Kindred2020} \footnote{Y.~Takimura introduced an unknotting-type number $u(P)$ for knot projections $P$; seeing $u(P)$, the author NI defined another function $u^-(P)$;   Takimura determined classes of knot projections with $u(P)=1, 2$ or those of $u^-(P)=1, 2$ \cite{ItoTakimura2018}.  The author NI showed $u^-(K)$ $=$ $C(K)$ for any prime alternating knot $K$ in a paper with Takimura \cite{ItoTakimura2020}; T.~Kindred \cite{Kindred2020} independently proved the equality.   Before these works appeared, the related works via band surgery had been given \cite{ItoTakimura2019}.}.  
The following question arises.
\begin{question}\label{realizeQ}
How to construct a spanning surface realizing the crosscap number of an alternating knot   efficiently by a computer program? 
\end{question}
In this paper, we introduce a notion of ``deformed plumbing" for surfaces  (Definition~\ref{DeformedP}) to give an answer to Question~\ref{realizeQ}.  
\begin{theorem}\label{RealizeT}
For any prime alternating knot, a  spanning surface realizing its  crosscap number is given by gluing surfaces induced by connected sums of knots 
and deformed plumbings of surfaces.
\end{theorem}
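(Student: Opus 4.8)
The plan is to reduce the geometric construction to a combinatorial computation, exploiting the equality $C(K) = u^-(K)$ for prime alternating knots recalled in the introduction. First I would fix a reduced alternating diagram $D$ of the given prime alternating knot $K$ and recall the procedure that computes $u^-(D)$ through a sequence of the elementary operations $\jo$ and $\s$ on the projection. The idea of the argument is to run this procedure in reverse: rather than simplifying $D$ step by step, I build a spanning surface by reading each elementary operation as an operation on surfaces, so that the final surface inherits a decomposition matching the $\jo$/$\s$ history.

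The central step is to set up a dictionary between the combinatorial moves and surface operations. The key claim is that a $\jo$-move corresponds to a deformed plumbing in the sense of Definition~\ref{DeformedP}, while the points at which the projection splits as a nontrivial connected sum correspond to connected sums of knots, which glue the associated spanning surfaces along a boundary arc. Establishing this correspondence rigorously — in particular, checking that the ``deformation'' built into Definition~\ref{DeformedP} is exactly what is needed so that each plumbed piece contributes the correct, possibly non-orientable, local surface — is where I expect the main work, and the main obstacle, to lie. The subtlety is that this must be an \emph{equality} of first Betti numbers, not merely an inequality, and that the boundaries of the glued pieces must reassemble to give precisely $K$.

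I would then argue by induction on the number of crossings of $D$, equivalently on the length of the $\jo$/$\s$ decomposition. In the base case the pieces are elementary surfaces: a disk, a single twisted band, or a Hopf band $H$ spanning a basic alternating tangle. In the inductive step, assembling the pieces by deformed plumbings and connected-sum gluings produces a surface $F$ bounded by $K$; here I would verify that the first Betti number behaves additively under these gluings in the relevant sense, so that $\beta_1(F)$ equals the value output by the $u^-$-computation, namely $u^-(K)$.

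Finally, combining these steps: the assembled surface $F$ spans $K$ and satisfies $\beta_1(F) = u^-(K) = C(K)$, so that $C(K) \le \beta_1(F')$ for every spanning surface $F'$ forces $F$ to be minimal. Thus $F$ realizes the crosscap number and carries exactly the promised structure, being built by gluing surfaces induced by connected sums of knots together with deformed plumbings of surfaces, which proves Theorem~\ref{RealizeT}. Throughout, the delicate point remains the exact matching in the dictionary of the second step, since the entire Betti-number bookkeeping depends on the local surface produced by each $\jo$-move agreeing on the nose with a deformed plumbing.
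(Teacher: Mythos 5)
Your proposal takes essentially the same route as the paper: the paper's proof likewise converts the move-based computation of $u^-$ into surface operations, using Fact~\ref{propMove1} (with Move~1 playing the role of your join/split operations) and matching each application of Move~1 with one deformed plumbing, so that the surface realizing $C(K)=u^-(P)$ is assembled move by move. The dictionary step you flag as the main obstacle is precisely what the paper's argument supplies, observing that the two attachment positions of a deformed plumbing can be brought, by twisted deformations and corner slidings, to the two positions on the boundary of a common region at which Move~1 is applied, giving the one-to-one correspondence between Move~1 and deformed plumbing.
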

Theorem~\ref{RealizeT} is an application of a known result of Sakuma \cite{Sakuma1994} (Theorem~\ref{SakumaThm}).  
\begin{theorem}[Sakuma \cite{Sakuma1994}]\label{SakumaThm}
For any special arborescent alternating knot $K$, a spanning surface realizing its crosscap number is given by plumbings of surfaces.  
\end{theorem}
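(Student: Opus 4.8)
The plan is to produce a plumbed spanning surface $S$ of $K$ and to prove that $C(K)=\beta_1(S)$, where $\beta_1$ denotes the first Betti number; the inequality $C(K)\le \beta_1(S)$ is routine, and the reverse inequality carries all the content. First I would make $S$ explicit from the arborescent data. A special arborescent alternating knot $K$ is encoded by a weighted tree whose vertices prescribe elementary twisted bands (unknotted annuli or M\"obius bands) and whose edges prescribe how successive bands are plumbed. Because a plumbing glues two surfaces along a square, one has $\chi(S_1\ast S_2)=\chi(S_1)+\chi(S_2)-1$, so for one-boundary surfaces the first Betti numbers add exactly: $\beta_1(S_1\ast S_2)=\beta_1(S_1)+\beta_1(S_2)$. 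Iterating along the tree yields a (non-orientable) spanning surface $S$ with $\beta_1(S)=\sum_v \beta_1(S_v)$, and since $S$ already spans $K$ this gives the upper bound $C(K)\le \beta_1(S)$. I would also record that, $K$ being alternating, $S$ may be taken to be a checkerboard surface of a reduced alternating diagram.

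The difficult direction is $C(K)\ge \beta_1(S)$: no spanning surface of $K$, orientable or not and not assumed to come from the diagram, may have smaller first Betti number. I would proceed in two stages. Stage one is to show that $S$ is essential, i.e.\ incompressible and $\partial$-incompressible; for checkerboard surfaces of reduced alternating diagrams this is classical, and in the general arborescent case it should follow by induction on the tree from the fact that a plumbing of essential surfaces is again essential. Stage two is to convert essentiality into minimality of $\beta_1$. The model here is Gabai's theorem that a Murasugi sum of minimal-genus Seifert surfaces is of minimal genus; I would seek the non-orientable counterpart by passing to the Gordon--Litherland form $\mathcal{G}_S$ on $H_1(S;\mathbb{Z})$, a symmetric bilinear form of rank $\beta_1(S)$ whose matrix is a Goeritz matrix presenting $H_1$ of the double branched cover $\Sigma_2(K)$. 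The block structure of $\mathcal{G}_S$ induced by the tree would then be compared against the presentation arising from any competing spanning surface.

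I expect the passage from ``each elementary piece is minimal'' to ``the plumbing is globally minimal'' to be the main obstacle, precisely because a competing spanning surface need bear no relation to the tree and so cannot be analyzed locally. The plumbing structure must instead be detected by an invariant that is simultaneously additive under plumbing and a genuine lower bound for $C(K)$. The minimal number of generators of $H_1(\Sigma_2(K))$ is such a lower bound, but it is known not to be sharp in general, so the arborescent hypothesis must be used to close the gap. My intended route is to decompose $\Sigma_2(K)$ along the lift of $S$ into the pieces dictated by the tree, to run a sutured-manifold or Thurston-norm count on each piece in the spirit of Gabai, and to verify compatibility of the counts across every plumbing square. The arborescent hypothesis on $K$ should guarantee that the resulting pieces are simple enough (atoroidal, with controlled boundary pattern) for this count to be exact, thereby forcing $\beta_1$ of any spanning surface of $K$ to be at least $\beta_1(S)$ and completing the matching lower bound.
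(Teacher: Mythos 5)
First, a point of comparison that is somewhat moot: the paper contains no proof of this statement at all. Theorem~\ref{SakumaThm} is imported from Sakuma's work as a black box and used only as input to Theorem~\ref{RealizeT}. Your proposal therefore has to stand on its own as a reconstruction of Sakuma's result, and as written it does not: the step carrying, in your own words, ``all the content'' is left as a plan rather than an argument.

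The concrete gap is your stage two, the lower bound $C(K)\ge\beta_1(S)$. Stage one buys you nothing toward it: essential (incompressible, $\partial$-incompressible) spanning surfaces of a knot can have arbitrarily large first Betti number, so essentiality of $S$ places no upper constraint on competing surfaces. The entire theorem is the claim that minimal non-orientable genus is preserved under iterated plumbing, and your model for that claim --- Gabai's Murasugi-sum theorem --- is intrinsically orientable: it is a statement about the Thurston norm, i.e.\ about integral homology classes represented by oriented norm-minimizing surfaces. Non-orientable spanning surfaces represent classes only in $\mathbb{Z}/2$-homology, the crosscap number has no known norm interpretation, and the sutured-manifold machinery you invoke on $\Sigma_2(K)$ minimizes the wrong quantity. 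Worse, the additivity you hope to establish is known to \emph{fail} for Murasugi sums in general: by Murakami--Yasuhara, the crosscap number can drop under connected sum (the simplest Murasugi sum), $C(K_1\# K_2)=C(K_1)+C(K_2)-1$ actually occurring. So there cannot exist an invariant that is both additive under Murasugi-type gluings and a lower bound for $C$; any correct proof must exploit the plumbing square and the arborescent structure in an essential way, and your sketch never says what replaces additivity when it fails.

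There is also a smaller but real flaw in the direction you call routine. The crosscap number is the minimal $\beta_1$ over \emph{non-orientable} spanning surfaces, so $C(K)\le\beta_1(S)$ requires $S$ to be non-orientable. When every vertex of the weighted tree carries an even weight, each band is an annulus and the plumbed surface is orientable (the figure-eight knot, plumbed from a $+2$- and a $-2$-band, is the simplest example); there the canonical plumbing gives only $C(K)\le\beta_1(S)+1$, and the surface realizing the crosscap number must be a \emph{different} plumbing. So even the easy inequality needs a case analysis, and the surface $S$ whose minimality you are trying to prove is not always the one handed to you by the tree.
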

Here,  
\emph{special arborescent knots} were introduced by Sakuma \cite{Sakuma1994}, each of which is the boundary of a spanning surface obtained by plumbing unknotted twisted annuli in a manner specified by the associated weighted tree (e.g. see Figure~\ref{ExtendGraph}).   

\section{A quick review of calculations of crosscap numbers of prime alternating knots}
In this section, we give a quick review of calculations of crosscap numbers of prime  alternating knots. In the rest of this paper, we often use the term ``Reidemeister moves" for knot projections if no confusion arises.  

To begin with, we recall Move~1 and Fact~\ref{propMove1}.  
\begin{move}[\cite{ItoTakimura2018}]
Let $P$ be a knot projection.  
For any pair of simple arcs lying on the boundary of a common region of $P$, the local replacement as in Figure~\ref{Move1} preserving the number of components is called \emph{Move~1}.    
\end{move}
\begin{figure}
\includegraphics[width=10cm]{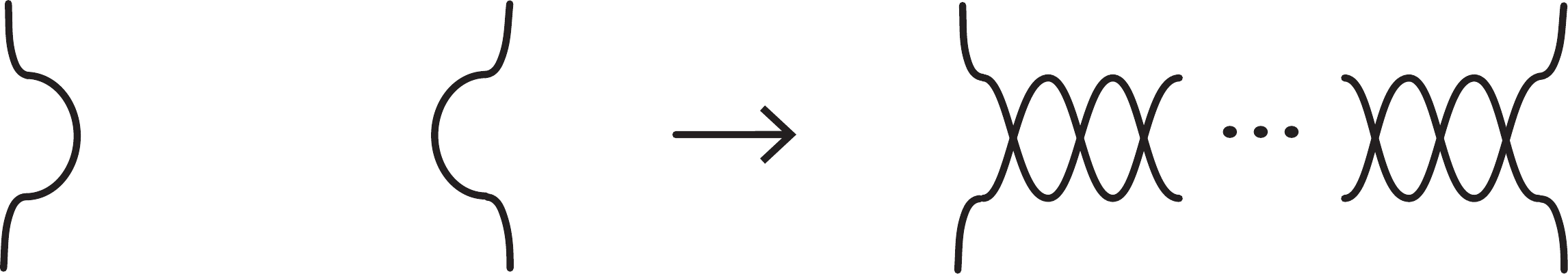}
\caption{A local move as above is called Move1 if it preserves the number of components.}\label{Move1}
\end{figure}
\begin{fact}[{\cite[Proposition~1]{ItoTakimura2018}}]\label{propMove1}
Let $P$ be a knot projection and $O$ the knot projection with no crossings.  
We say that $P \in \langle \{ O \}  \rangle$ if $P$ and $O$ are related by a finite sequence of operations of the first Reidemeister move.  The following conditions are equivalent.  
\begin{enumerate}
\item[(A)] $P$ satisfies $u^- (P) =n$.  
\item[(B)] There exists $Q \in \langle \{ O \} \rangle$ such that $P$ is obtained from $Q$ by applying Move~1 successively $n$ times.   
\end{enumerate}   
\end{fact}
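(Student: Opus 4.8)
The plan is to read $u^-(P)$ as the minimal number of Move~1 operations appearing in any finite sequence of first Reidemeister moves and Move~1 operations that carries $P$ to the trivial projection $O$, and to regard condition~(B) as a \emph{normal form} for such a sequence: one in which every Move~1 is performed after an initial phase consisting only of first Reidemeister moves. With this reading the two conditions are tied to the same integer $n$, and I would prove the two implications separately: (B)$\Rightarrow$(A) by induction on $n$, and (A)$\Rightarrow$(B) by a rearrangement (normal-form) argument. The base case $n=0$ is exactly the definition of $\langle\{O\}\rangle$ (take $Q=P$ and apply Move~1 zero times). Two preliminary observations feed the argument: first, that a chain of Move~1 operations may be read in either direction, so that ``building $P$ up from $Q$'' and ``reducing $P$ down to $Q$'' describe the same chain; and second, that a single Move~1 changes $u^-$ by at most one, which is what lets the count be tracked exactly across the induction.

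The implication (B)$\Rightarrow$(A) is the routine half. Given $Q\in\langle\{O\}\rangle$ and a chain of $n$ Move~1 operations carrying $Q$ to $P$, I would read it backwards to obtain a reduction of $P$ to $Q$ using $n$ Move~1 operations, then append a sequence of first Reidemeister moves taking $Q$ to $O$; this exhibits a mixed trivialization of $P$ with exactly $n$ Move~1 operations, so $u^-(P)\le n$. For the reverse inequality I would peel off the last Move~1 in the chain, apply the inductive hypothesis to the intermediate projection $P'$ (obtained from $Q$ by $n-1$ Move~1 operations, whence $u^-(P')=n-1$), and use the ``changes by at most one'' observation to conclude $u^-(P)\ge n$, giving equality.

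The substance is in (A)$\Rightarrow$(B), where I must convert an \emph{arbitrary} optimal trivialization, in which the $n$ Move~1 operations are interspersed with first Reidemeister moves, into the normal form. Reading the optimal sequence backwards gives a path from $O$ to $P$, and the goal is to commute all first Reidemeister moves to the front, so that the path first reaches some $Q\in\langle\{O\}\rangle$ and then performs the $n$ Move~1 operations consecutively to reach $P$. The engine is a commutation lemma: an adjacent pair (first Reidemeister move, Move~1) can be reordered, possibly after adjusting the Reidemeister move, without creating any new Move~1 operation. When the monogon created or destroyed by the Reidemeister move is disjoint from the two simple arcs and the common region on which the Move~1 acts, the two operations touch separated parts of the diagram and commute verbatim; the remaining cases are local.

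The hard part will be exactly this local case analysis: when the kink of a first Reidemeister move sits on, or adjacent to, the arcs or the region targeted by a Move~1, the naive swap is not literal, and one must check, case by case according to how the two arcs lie on the boundary of the region and how the monogon is attached, that a corrected pair of moves achieves the same net effect while keeping the number of Move~1 operations fixed. Controlling that this repeated commutation terminates, by arranging each swap to move a Move~1 strictly later without increasing the total count so that a monovariant argument applies, and verifying that no configuration is forced to introduce an extra Move~1, is the crux. Once it is in hand, pushing all Reidemeister moves to the front is automatic, and the projection reached at the end of the Reidemeister phase is the desired $Q\in\langle\{O\}\rangle$.
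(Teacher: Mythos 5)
There is a preliminary point you could not have known but which frames this review: the paper does not prove this statement at all. It is imported verbatim as a Fact from \cite[Proposition~1]{ItoTakimura2018}, so there is no in-paper argument to compare yours against, and your proposal has to be judged on its own terms. On those terms, the first problem is foundational: the paper never defines $u^-(P)$, and you substitute a definition of your own choosing (minimal number of Move~1 operations in a mixed sequence of first Reidemeister moves and Move~1's trivializing $P$). In \cite{ItoTakimura2018} the number $u^-$ is defined via (half-twisted) splice operations, and Proposition~1 there is precisely the bridge between that definition and formulation (B); by instead adopting a reading tailored to make (A) and (B) comparable, you risk proving an equivalence between two statements of your own devising rather than the cited one.

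Even granting your reading, there are two genuine gaps. In (B)$\Rightarrow$(A), the inductive step is a non sequitur: from $u^-(P')=n-1$ and ``a single Move~1 changes $u^-$ by at most one'' you can conclude only $u^-(P)\ge n-2$, not $u^-(P)\ge n$; the latter would require Move~1 to \emph{strictly increase} $u^-$, which is false. Concretely, applying Move~1 to the trivial projection $O$ (two arcs on the boundary of the inner region, joined by the component-preserving crossing) yields the one-crossing projection, which lies in $\langle\{O\}\rangle$ and so has $u^-=0$, although (B) holds for it with $n=1$ and $Q=O$. Thus exact equality cannot be derived the way you attempt; the statement is only coherent if (B) is read with minimality over $n$, and a correct proof must build that in explicitly. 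In (A)$\Rightarrow$(B), what you present is a plan, not a proof: the local case analysis for the commutation lemma --- which you yourself identify as the crux --- is entirely deferred. Moreover, pushing first Reidemeister moves to the front is not the only normalization needed: under your definition a mixed trivializing sequence may contain Move~1 operations in both directions (adding and removing a double point), whereas (B) requires all $n$ of them to be coherently directed from $Q$ to $P$; handling oppositely-directed pairs without inflating the count requires a separate cancellation argument that the proposal never mentions.
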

\begin{fact}
Let $K$ be a prime alternating knot and $P$ be a knot  projection induced by an alternating knot diagram of $K$.  Let $C(K)$ be the crosscap number of $K$.  
Then
\[
C(K) = u^-(P).  
\]
\end{fact}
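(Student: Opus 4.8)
The plan is to establish the two inequalities $C(K)\le u^-(P)$ and $u^-(P)\le C(K)$ separately, using Fact~\ref{propMove1} as the bridge between the combinatorics of Move~1 and the geometry of spanning surfaces. Throughout, I would read a sequence of Move~1 operations as a recipe for attaching bands to a disk: the trivial projection $O$ bounds a disk, and each application of Move~1 along two simple arcs on the boundary of a common region corresponds to gluing in a single (possibly twisted) band. The hypothesis that Move~1 preserves the number of components guarantees that the surface so built spans a \emph{knot}, and its first Betti number is controlled by the number of moves performed.

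For the inequality $C(K)\le u^-(P)$, which should hold for every projection and not only the alternating one, I would argue as follows. Write $n=u^-(P)$. By Fact~\ref{propMove1} there is $Q\in\langle\{O\}\rangle$ with $P$ obtained from $Q$ by $n$ successive applications of Move~1. Since $Q$ and $O$ differ only by first Reidemeister moves, $Q$ still bounds a disk, and the $n$ Move~1 operations build from this disk a connected spanning surface $F$ of $K$ with at most $n$ bands. Arranging the bands so that $F$ is non-orientable makes $F$ admissible for the crosscap number, and a band count for its Euler characteristic gives $C(K)\le n=u^-(P)$. The only care needed here is the bookkeeping of $\chi(F)$ and of non-orientability, which I would handle by the standard analysis of checkerboard-type surfaces.

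The substantive direction is $u^-(P)\le C(K)$, and this is where alternation of $P$ is essential. I would start from a non-orientable spanning surface realizing $C(K)$ and, invoking the structure theory of minimal spanning surfaces for alternating knots, arrange it to be compatible with the alternating diagram $P$---concretely, as a checkerboard surface or, in the special arborescent case, as a plumbing of unknotted twisted annuli in the sense of Theorem~\ref{SakumaThm}. Such a surface admits a disk-with-bands decomposition in which the number of bands equals $C(K)$, and reading the bands off the diagram produces some $Q\in\langle\{O\}\rangle$ together with a sequence of $C(K)$ Move~1 operations carrying $Q$ to $P$. Fact~\ref{propMove1} then yields $u^-(P)\le C(K)$.

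The main obstacle is this last step: one must know that for a prime alternating knot the crosscap number is realized by a surface sitting compatibly with the alternating projection, so that its bands translate into legitimate Move~1 operations. This is exactly the content of the theorems cited in the introduction \cite{ItoTakimura2019,ItoTakimura2020,Kindred2020}, proved there via the Gordon--Litherland form and the special combinatorics of reduced alternating diagrams, so I would invoke those results here rather than reprove them. A secondary point worth recording is that, since all reduced alternating diagrams of $K$ are related by flypes and $u^-$ is invariant under flyping, $u^-(P)$ is independent of the chosen alternating diagram, consistent with its coinciding with the knot invariant $C(K)$.
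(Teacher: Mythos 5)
The paper offers no proof of this Fact at all: it is a known result imported from the literature (the Ito--Takimura and Kindred papers cited in the introduction), and your proposal ultimately reduces to the same citation, since the one substantive direction $u^-(P)\le C(K)$ is delegated to exactly those references, which in fact establish the full equality. So your treatment is essentially the paper's; your added sketch of $C(K)\le u^-(P)$, reading each Move~1 as attaching a half-twisted band to the disk bounded by $Q$, matches the band-surgery interpretation used in those references and introduces nothing that conflicts with what the paper takes as known.
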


We see the first two steps in the following.  
First, any prime alternating knot with $C(K)=1$ is known as a $(2, n)$-torus knot \cite{Clark1978}.  Second, when we apply Move~1 to a prime alternating knot with $C(K)=1$, there are three possibilities: (A) a $(2m, 2n)$-rational knot, (B): $(2l, 2m-1, 2n-1)$-pretzel knot, and (C) a connected sum of two alternating knots $K$ and $K'$, which satisfy $C(K)$ $=$ $C(K')=1$ (Figure~\ref{StepTwo}).  
\begin{figure}
\includegraphics[width=12cm]{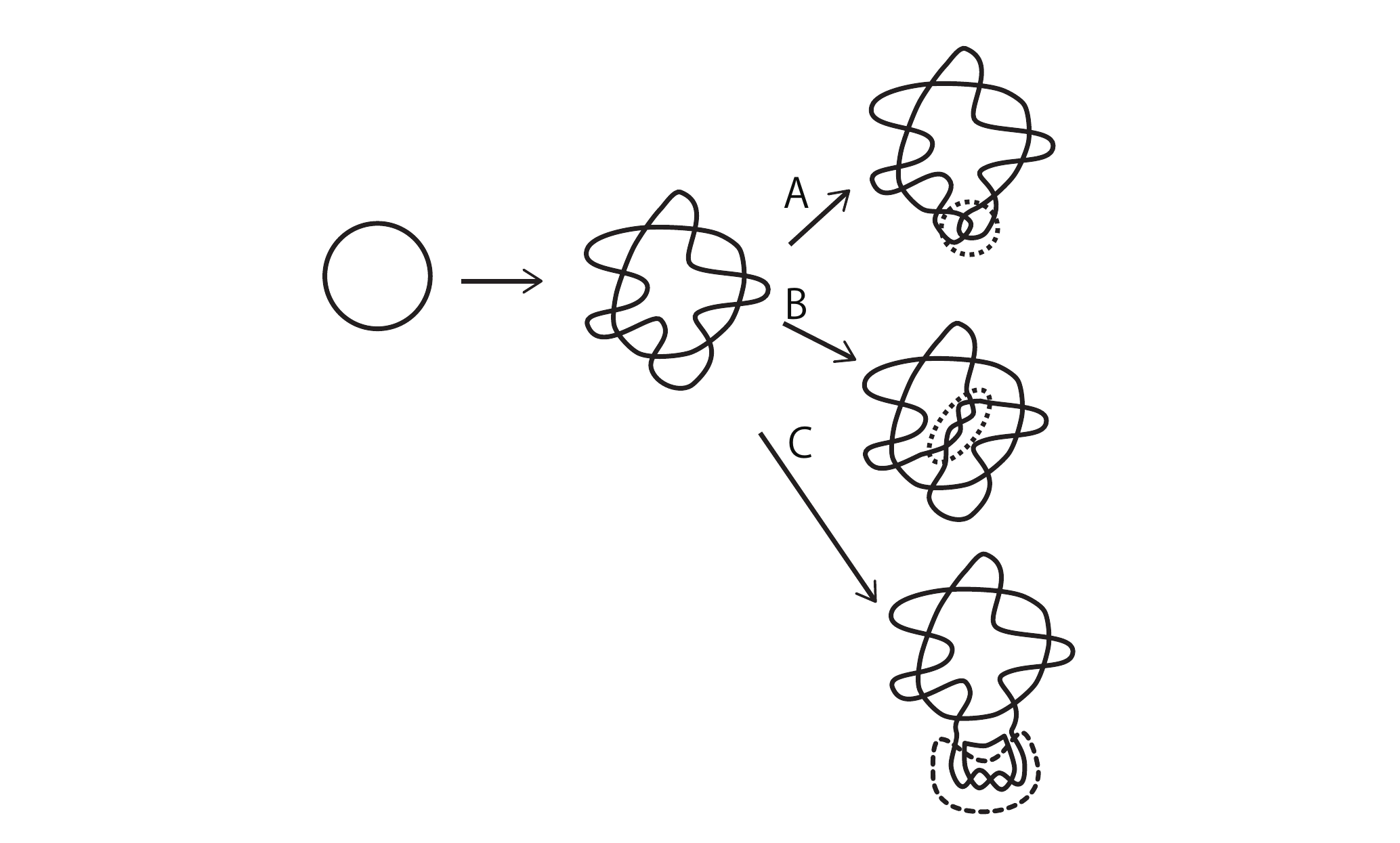}\label{StepTwo}
\caption{
Cases (A), (B), and (C) from the $(2, 2n-1)$-torus knot projection (in this figure, $n=3$ is presented).}\label{figABC}
\end{figure}
Since the case (C) is given by a connected sum, here we will treat cases (A) and (B) in Sections~\ref{PathASec} and \ref{PathBSec} respectively.    

\section{Case~(A)}\label{PathASec}
The case (A) is interpreted as a plumbing as follows. 

Let $\Gamma$ be a planar tree with vertices, each of which is labeled by an integer.  
\begin{figure}
\includegraphics[width=6cm]{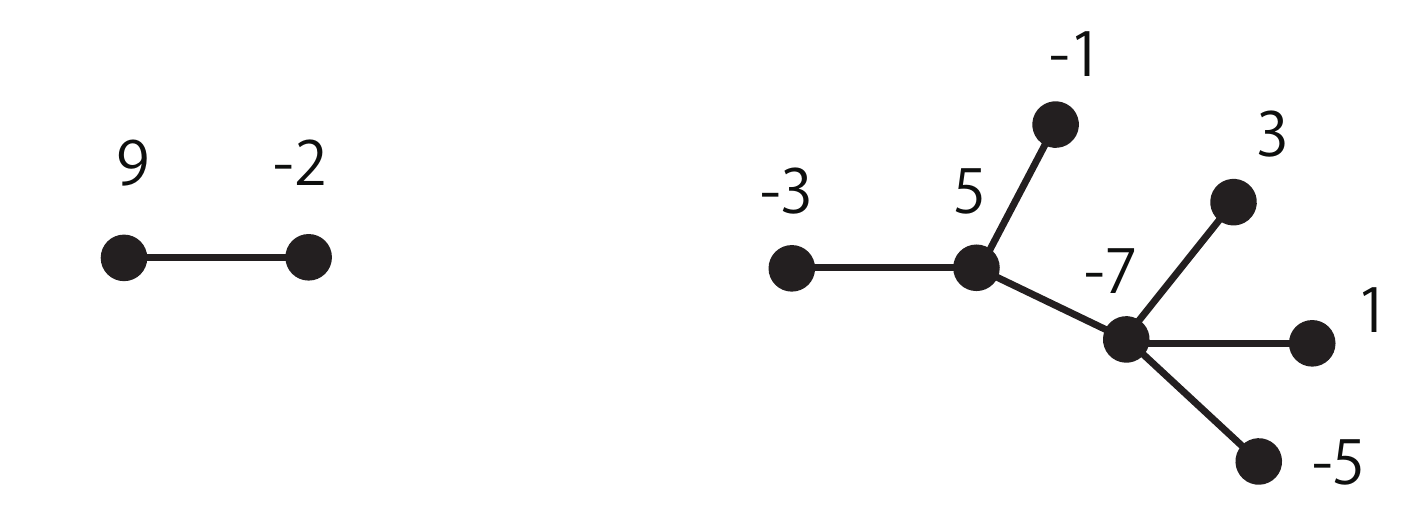}
\caption{A planar tree with integers.}\label{Tree}
\end{figure}
It is a known procedure (\cite{Sakuma1994}), which associates a spanning surface of a knot to the tree $\Gamma$: 
\begin{enumerate}
\item draw a twisted band (see Figure~\ref{Band}) at each vertex $\Gamma$, according to its integer, where each labeled integer corresponds to the signs and the number of half-twists on the bands.   
\item plumb the twisted bands together along the edges of $\Gamma$, as in Figure~\ref{Plumb}. 
\end{enumerate}
\begin{figure}
\includegraphics[width=8cm]{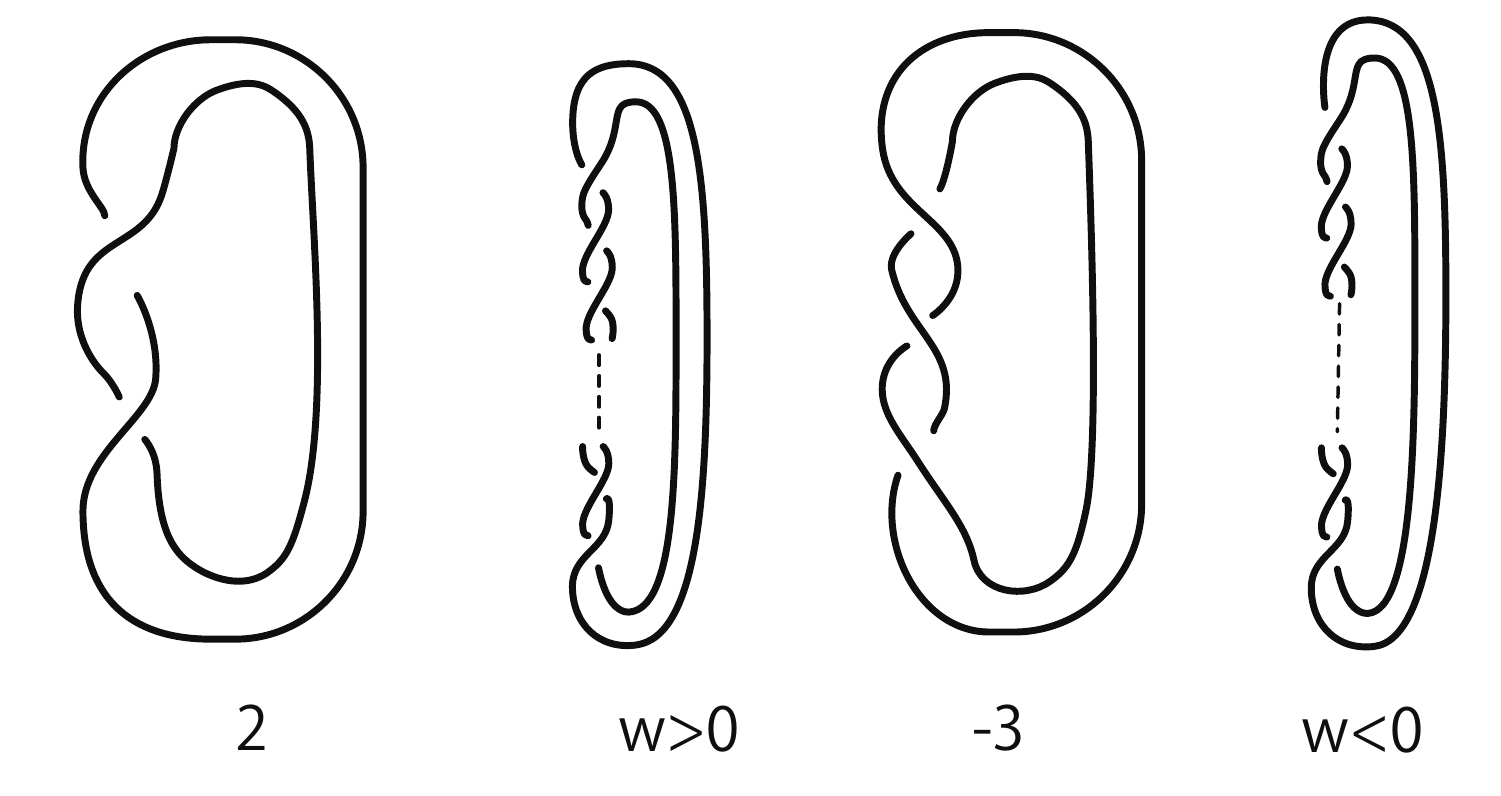}
\caption{A positive or negative twisted band}\label{Band}
\end{figure}
\begin{figure}
\includegraphics[width=12cm]{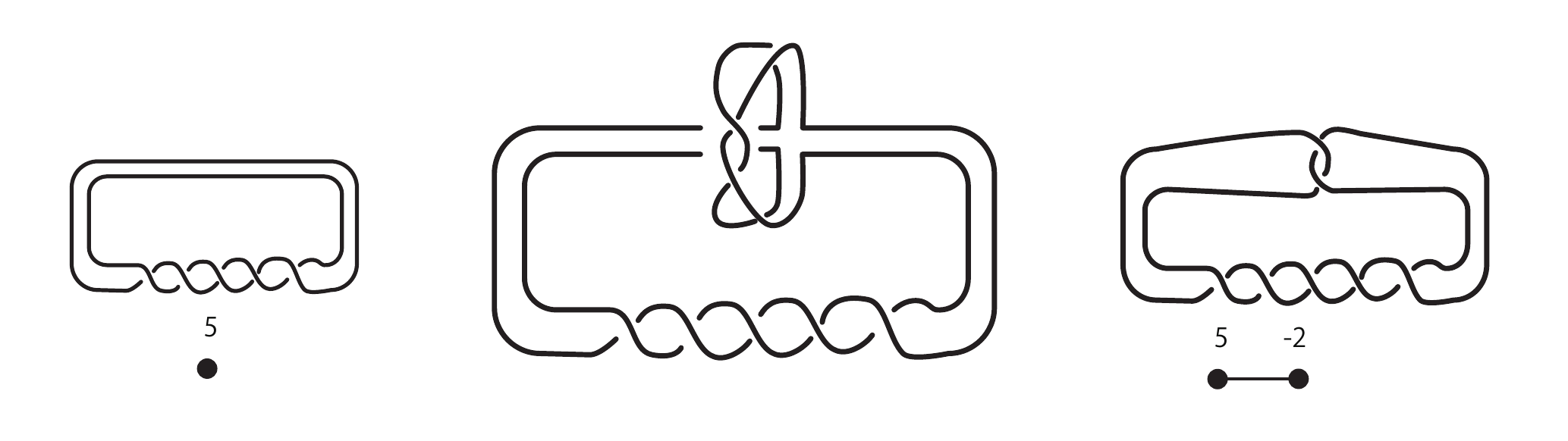}
\caption{Plumbing along an edge of $\Gamma$}\label{Plumb}
\end{figure}
\begin{figure}
\includegraphics[width=12cm]{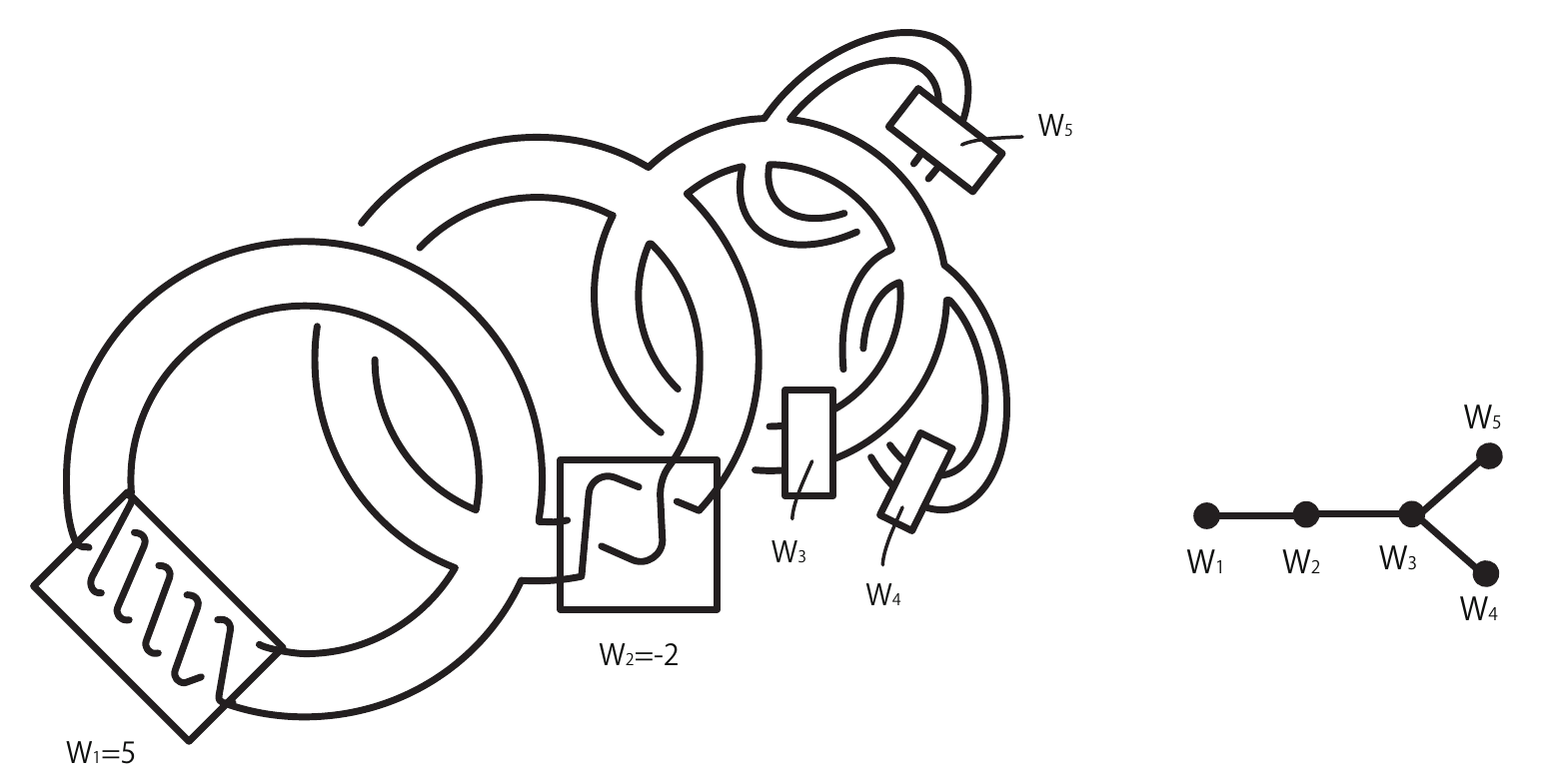}
\caption{A construction of a spanning surface of a special arborescent link \cite{Hirasawa2002}}\label{ExtendGraph}
\end{figure}


\section{Case (B)}\label{PathBSec}
As it is well-known, a plumbing is a special case of Murasugi sum; two surfaces are connected at a square (Figure~\ref{DeformMurasugi}, left).    
Here, we introduce  Definition~\ref{DeformedP}.  
\begin{definition}[Deformed plumbing]\label{DeformedP}
After we apply a plumbing to two spanning surfaces, there is a square in the resulting surface  as in Figure~\ref{DeformMurasugi} (left).  Then we replace the square with a full-twisted band and this deformation is called a \emph{deformation of plumbing}.   The upper right figure in Figure~\ref{DeformMurasugi}  indicates a positive twisting and the lower right figure in Figure~\ref{DeformMurasugi}  indicates a negative twisting.  
Although the induced operation would not arise a plumbing, but it is similar to the original one, thus, we call it a \emph{deformed  plumbing}.    
\end{definition} 
\begin{figure}
\includegraphics[width=14cm]{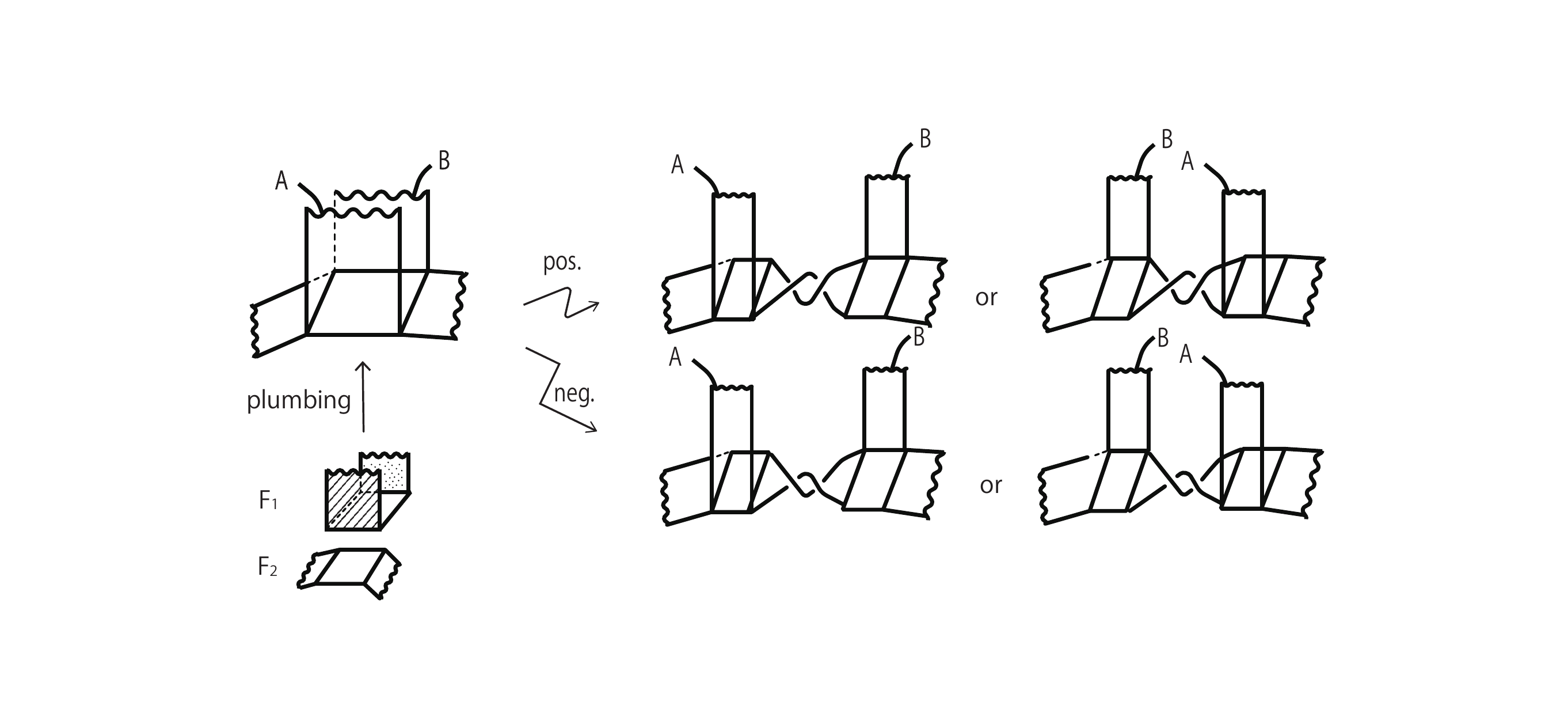}
\caption{A plumbing by identifying squares (left); a positive twisted deformation of plumbing (upper right); a negative twisted deformation of plumbing (lower  right).  }\label{DeformMurasugi}
\end{figure}
For Case (A), by applying a  deformation of a plumbing repeatedly, we have each case (B).  See Figures~\ref{DeformEven} and \ref{DeformOdd}.  
\begin{figure}
\includegraphics[width=12cm]{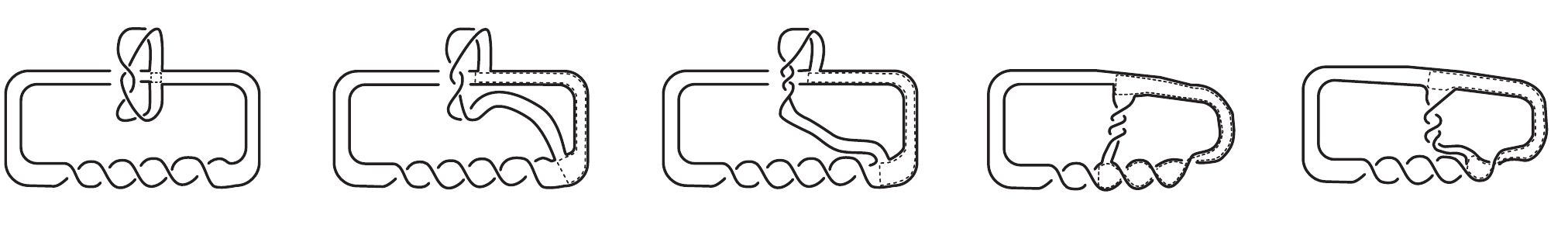}
\caption{Case (A) to Case (B) (Case~1)}\label{DeformEven}
\end{figure}
\begin{figure}
\includegraphics[width=12cm]{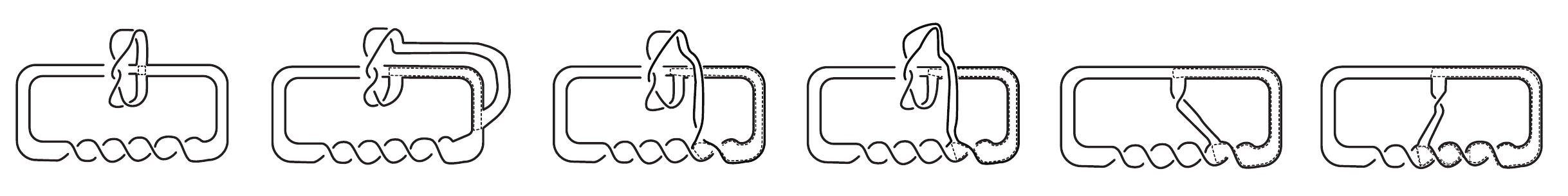}
\caption{Case (A) to Case (B)  (Case~2)}\label{DeformOdd}
\end{figure}

\section{The other cases}
Finally, we consider the other possible cases.  
We recall the construction of special arborescent links by Sakuma \cite{Sakuma1994}.  
\begin{notation}\label{CoreNotation}
For simplifying notations of this construction, 
we only draw cores of the bands (Figure~\ref{Simple}).  Each core will be called a \emph{twisted  band} or a \emph{band} simply.
\end{notation}
\begin{figure}
\includegraphics[width=12cm]{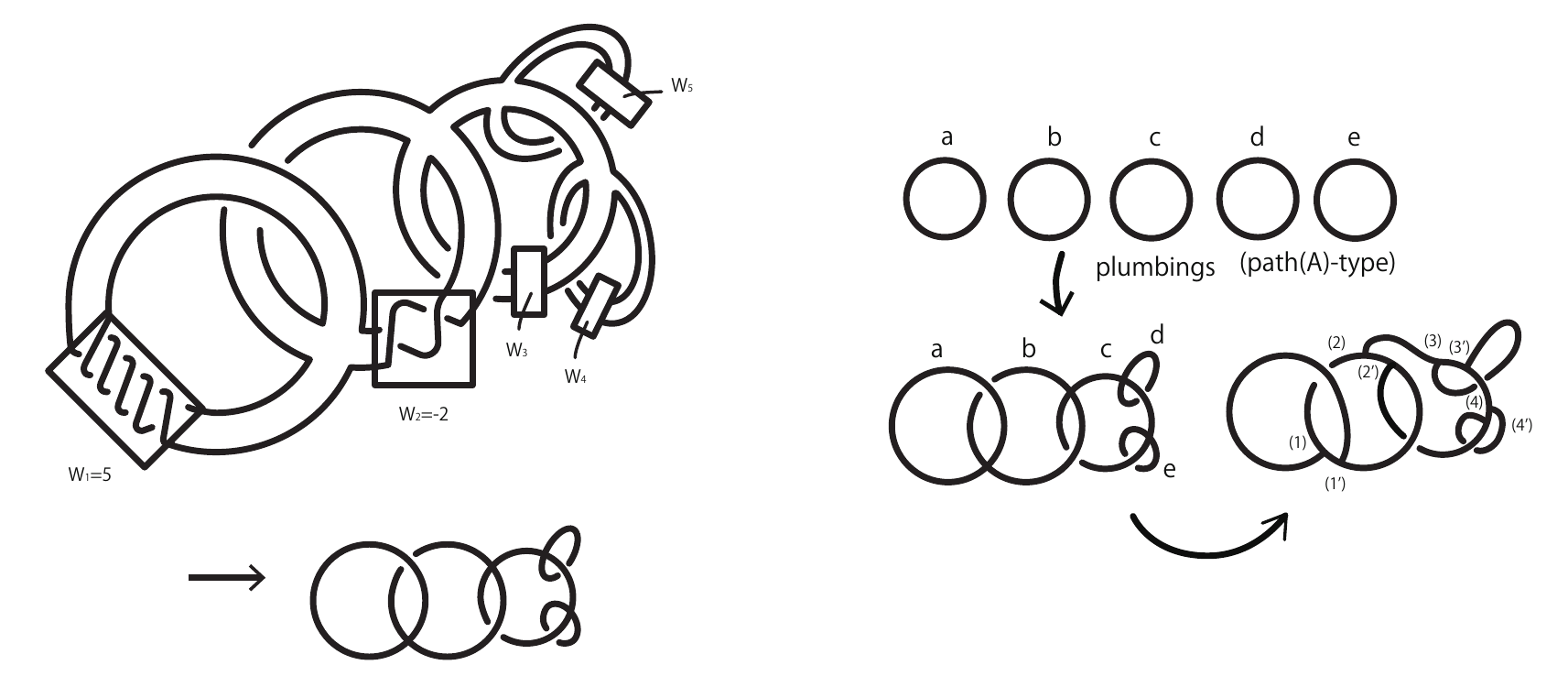}
\caption{Simplifying notations (left), Sliding bands (right)}\label{Simple}
\end{figure}   
By using Notation~\ref{Simple} and apply plumbings and deformed plumbings, we have a graph where endpoints of bands may be slidden as in Figure~\ref{Simple}.
For example, seeing Figure~\ref{Simple} (right), we easily observe that for twisted  bands $a$, $b$, $c$, $d$, and $e$, by applying plumbing, we have a graph as the skeleton of a resulted surface.  When we apply deformations (Definition~\ref{DeformedP}) of plumbings, an endpoint of a single twisted band can be moved along a single band, but the endpoint cannot be moved to a different band.  

Here we prepare one more deformation.  
\begin{definition}[Corner sliding]\label{CornerSliding}
A \emph{corner sliding} is a deformation as in Figure~\ref{CornerSliding}.  
\end{definition}
By definition, every corner sliding does not change the spanning surface. 

\begin{figure}
\includegraphics[width=5cm]{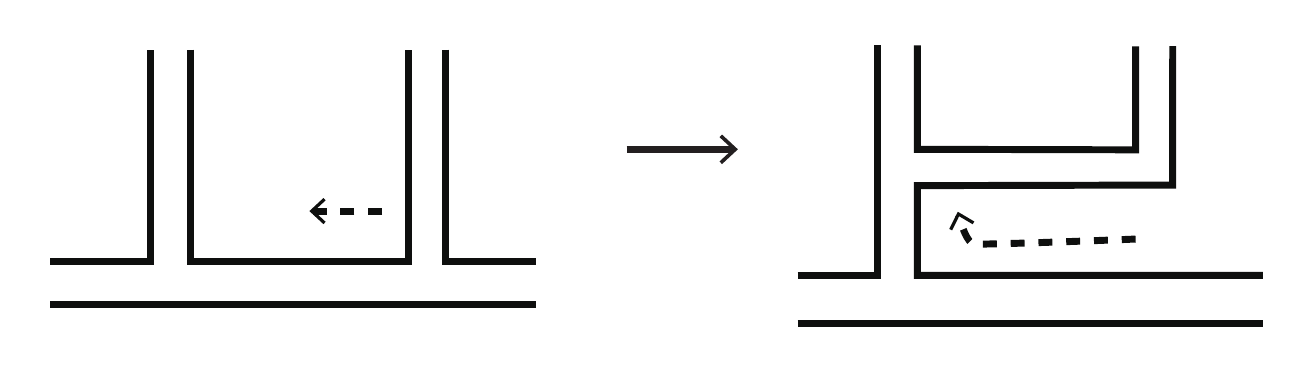}
\caption{Corner sliding}\label{CornerSliding}
\end{figure}
As a result, using plumbings, deformed plumbings and corner sliding, it is possible to attach a twisted band between any of two cores that correspond to vertices of $\Gamma$ as in Section~\ref{ProofTh}.   

\section{Proof of Theorem~\ref{RealizeT}.}\label{ProofTh}
\begin{proof}
Recall Fact~\ref{propMove1}.  Any knot projection is realized by applying Move~1 to a knot projection repeatedly since any knot projection corresponds to  an alternating projection of an alternating knot $K$ with the  crosscap number $C(K)$ ($=u^-(P)=n$ for a positive integer $n$).  Let us compare the Move~1 and deformed plumbings and corner slidings.  
\begin{itemize}
\item Move~1 is applied to any two positions on boundary of a common region.  
\item A deformed plumbing is applied to two positions: a position  is selected arbitrarily and the other can be moved to the corresponding position of Move~1 by twisted deformations and corner slidings.   
\end{itemize}
Clearly, since the above-mentioned two conditions are the same, a single Move~1 one-to-one corresponds to a deformed plumbing.     
\end{proof}

\section{Preliminary for  computation by a computer aid}\label{CAid}

The author KY used a new way to a sufficient information to compute crosscap numbers of prime alternating knots by computer aid.   Our plan is as follows.   
\begin{enumerate}
\item We define a graph $G$   obtained from an alternating knot diagram;  \label{Y1}
\item We define the \emph{reduced} graph for a given $G$; \label{Y2}
\item By inputting the initial data of the case $u^- (D)=0$, we have the alternating knot diagrams with a given crosscap number $n$.   \label{Y3}
\end{enumerate}
In the following, we will provide  (\ref{Y1}) and (\ref{Y2}).

\begin{definition}[Knot Eulerian Graph (e.g.~Figure~\ref{Step2})]
Let $D$ be an alternating knot diagram.   Firstly, we focus on a twisting which will correspond to a vertex.      
Every twisted region of $D$ created by Move~1 is simply presented as in Figure~\ref{Vertex} (upper), and call it a  \emph{real vertex}.
Clearly, every real vertex has two inputs and two outputs.  
We also define an \emph{empty-vertex} \footnote{In this paper, empty vertex has no essential role, but we prepare this notion to realize (\ref{Y3}).} as in an edge (Figure~\ref{Vertex}, lower), and we can add/remove any empty-vertex on an arc.   Every empty-vertex has a single input/output.  Each vertex is equipped with the following information.  
\begin{itemize}
\item Each input corresponds to a unique output.  
\item Each vertex has the valency exactly two or four.  
\item If a vertex corresponds to odd (resp.~even) twisting, we call it an \emph{odd} (resp.~\emph{even}) type.  
\item 
Each twisting with at least two crossings naturally gives a symmetric axis of the real vertex and the left and right sides are labeled by letters $A$ and $B$ in an arbitrary way, wheres an empty-vertex is labeled by ``\emph{None}".   This symmetric axis is called an \emph{axis}.  
If a twisting has exactly one  crossing, we define the axis in an arbitrary way (Figure~\ref{Vertex}).  
\end{itemize}
Then an alternating knot diagram induces all the edges, each of which connects one or two vertex(es).  
Each edge will be labeled by a tuple $(u, v, T_u, T_v)$ of symbols $u, v, T_u$, and $T_v$ defined as follows.   
Each symbol of a $4$ tuple  determines which side a vertex connect to.  
\begin{itemize}
\item $u$ is the initial point of the edge 
\item $v$ is the terminal point of the  edge (the order of $u$ and $v$ reproduces the orientation of the edge.)    
\item $T_u$ is the half part labelled by $A$, $B$, or ``None" having the point $u$.   
\item $T_v$ is the half part labelled by $A$, $B$, or ``None" having the point $v$.
\end{itemize}
By the above construction, an alternating knot diagram induces a graph with even valencies.   
Since by definition, it is an Eulerian graph, we call this graph a \emph{knot Eulerian graph}.  
\end{definition}
\begin{figure}
\includegraphics[width=6cm]{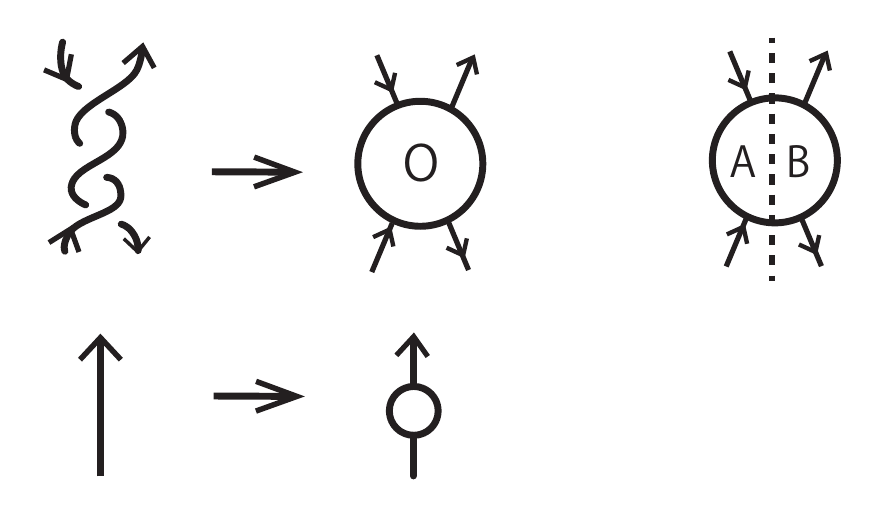}
\caption{Real vertex (upper line); empty-vertex (lower line).  The label ``O" in the vertex  indicates that the number of crossings is an odd number.}\label{Vertex}
\end{figure}
\begin{proposition}
\begin{enumerate}
\item An oriented knot projection gives a knot Eulerian graph.  
\item A knot Eulerian graph gives a family of oriented knot projections of knots, each of which has the same crosscap number. 
\end{enumerate} 
\end{proposition}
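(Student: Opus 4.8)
The plan is to read the two parts as the two directions of the construction that immediately precedes the statement. For part (1), I would argue that the assignment is well-defined and always lands in the class of Eulerian graphs. Starting from an oriented knot projection, each twisted region produced by Move~1 is recorded as a real vertex, and by the definition each such vertex carries exactly two inputs and two outputs, hence valency four; every empty-vertex placed on an arc has valency two. Since the only vertices are of these two kinds, every vertex has even valency, so the graph is Eulerian. The orientation of the projection fixes the order of $u$ and $v$, and the local picture at each endpoint (initial/terminal point together with the $A$/$B$/``None'' side) determines the remaining entries, so the edge-label $(u,v,T_u,T_v)$ is well-defined. This produces a knot Eulerian graph and settles (1).

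For part (2), I would reverse the recipe. Given a knot Eulerian graph, I read each real vertex as a twisted band whose parity is prescribed by its odd/even type and whose two sides are distinguished by the $A$/$B$ labels, and I read each edge as an instruction for how to attach the corresponding bands, using the $T_u,T_v$ data to decide which sides are glued; the actual gluing is carried out by the plumbings, deformed plumbings (Definition~\ref{DeformedP}), and corner slidings (Definition~\ref{CornerSliding}) assembled in the earlier sections. Realizing these bands in the plane and joining them as prescribed yields an oriented knot projection. Because the graph fixes only the parity of each twisting and not the exact number of half-twists, because valency-two empty-vertices may be inserted or deleted freely, and because the $A$/$B$ labels were chosen arbitrarily, the reconstruction is not unique but returns a whole family of projections; I would make explicit that two members of this family differ only by changes invisible to the graph, namely the number of crossings within a fixed-parity twisting, the placement of empty-vertices, and the labeling convention.

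The heart of the matter, and the step I expect to be the main obstacle, is showing that every projection in this family has the same crosscap number. Here I would invoke the identity $C(K)=u^-(P)$ for the alternating projection together with Fact~\ref{propMove1}: a projection with $u^-(P)=n$ is exactly one obtained from a crossingless projection by $n$ successive applications of Move~1, and each such application is precisely what creates one real vertex. Thus the number of real vertices of the knot Eulerian graph should equal the number of Move~1 operations and hence equal $u^-(P)=C(K)$; alternatively, realizing the graph by plumbings and deformed plumbings as in Theorem~\ref{RealizeT} produces a spanning surface whose complexity is governed by the same count. What remains is to verify that the moves relating two members of the family do not disturb this count: for the empty-vertices and the relabeling this is immediate, while the delicate case is changing the number of half-twists inside a real vertex, where I would argue through the first Reidemeister move and the definition of $u^-$ on $\langle\{O\}\rangle$ that altering a twisting by an even number of half-twists is absorbed into the equivalence $\langle\{O\}\rangle$ and therefore leaves $u^-$, and hence $C(K)$, unchanged. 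Establishing this invariance rigorously is the crux, since it is exactly what guarantees that the crosscap number is a function of the knot Eulerian graph rather than of the particular projection.
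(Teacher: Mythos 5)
Your part (1) and the reconstruction half of part (2) follow essentially the same route as the paper: the paper likewise observes that every crossing of an alternating projection lies in a twisting, replaces each twisting by a real vertex carrying odd/even information to get a $4$-valent (hence Eulerian) graph, and recovers a projection from the graph using the edge orientations, the odd/even types, and the axes, which it stresses are essential because they fix the twisting direction (Conway-notation style). Your detour through plumbings, deformed plumbings and corner slidings to realize the bands is heavier than the paper's direct read-off, but it is not wrong.

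The genuine gap is in your main argument for the ``same crosscap number'' claim. You assert that the number of real vertices equals the number of Move~1 operations and hence equals $u^-(P)=C(K)$. This is false, and the paper's own Section~2 supplies the counterexample: the $(2l,2m-1,2n-1)$-pretzel projection of case (B) is obtained by a single Move~1 from a $(2,2n-1)$-torus projection, so it has $u^-=2$, yet it has three twisted regions and therefore three real vertices. The mismatch occurs because a Move~1 can consume first-Reidemeister curls already present in $Q\in\langle\{O\}\rangle$, so one move can create or enlarge several twistings' worth of crossings; twist regions and Move~1 operations are not in bijection. Your fallback argument --- that two projections with the same knot Eulerian graph differ only by even numbers of half-twists inside twistings (plus empty-vertices and relabeling), and that such a change can be absorbed into $Q$ by extra first Reidemeister moves without changing the Move~1 count, hence without changing $u^-$ and $C$ --- is the correct mechanism, and it is what the paper implicitly relies on via $u^-=C$; but you explicitly leave it unproven, calling it the crux. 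So as written, your proposal establishes (1) and the reconstruction in (2), while the invariance of the crosscap number rests on one step that fails outright and one sketch that is not carried out. For what it is worth, the paper's own proof is also silent here: it only records that orientation, types, and axes suffice to recover the projections, leaving the constancy of $C$ across the family implicit rather than proved.
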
\label{PropKE}
\begin{proof}

(1) Since, any crossing of oriented alternating knot belongs to a twisting, by replacing each twisting with a real vertex having odd/even-information, we have a $4$-valent graph, which is a knot Eulerian graph.    Here, one may add empty-vertices arbitrarily.  
(2) Since each edge of the knot Eulerian graph has an orientation and each real vertex has information of types (odd/even) and axes, we choose an  oriented knot projection from the knot Eulerian graph (we ignore empty-vertices if necessary).  Here, the information of axes is essential because the twisting direction is determined by an axis (alike Conway notation).  
\end{proof}
By Proposition~\ref{PropKE}, a knot Eulerian graph $G$ gives a knot projection $P$ and then $P$ is called an \emph{underlying knot projection} of $G$.    
\begin{definition}\label{ReKED}
We give a \emph{reduction} algorithm of knot Eulerian graphs as an analogue of reduced alternating knot diagrams in the following. 
\begin{itemize}
\item Step~1: Remove all the empty-vertices.   
\item Step~2 ($\ri^-$ as in Figure~\ref{RI}): Find an edge $(u, u, A, B)$ ($(u, u, B, A)$,~resp.);  remove the vertex $u$ and the edge $(u, u, A, B)$ ($(u, u, B, A)$,~resp.); replace edges $(a, u, T_a, B)$ and $(u, b, A, T_b)$ ($(a, u, T_a, A)$ and $(u, b, B, T_b)$,~resp.) with the edge $(a, b, T_a, T_b)$.  
\item Step~3: Whenever we  unify two vertexes, we do it.  Practically, we need to check it  every pair (as in Figure~\ref{Eg}, upper) of vertices $u, v$ if they can be unified.    
In order to make a computer program, we should list all possible cases of unifying, but here we see only one example.  We believe that the reader can easily recover the full list: 

\noindent If we find two neighboring vertices $P_1$ and $P_2$ as in  Figure~\ref{Eg} (lower left), we remove $P_1$, $P_2$ and add a vertex $Q$ as in Figure~\ref{Eg} (lower right).
\end{itemize}
\end{definition}
\begin{figure}
\includegraphics[width=6cm]{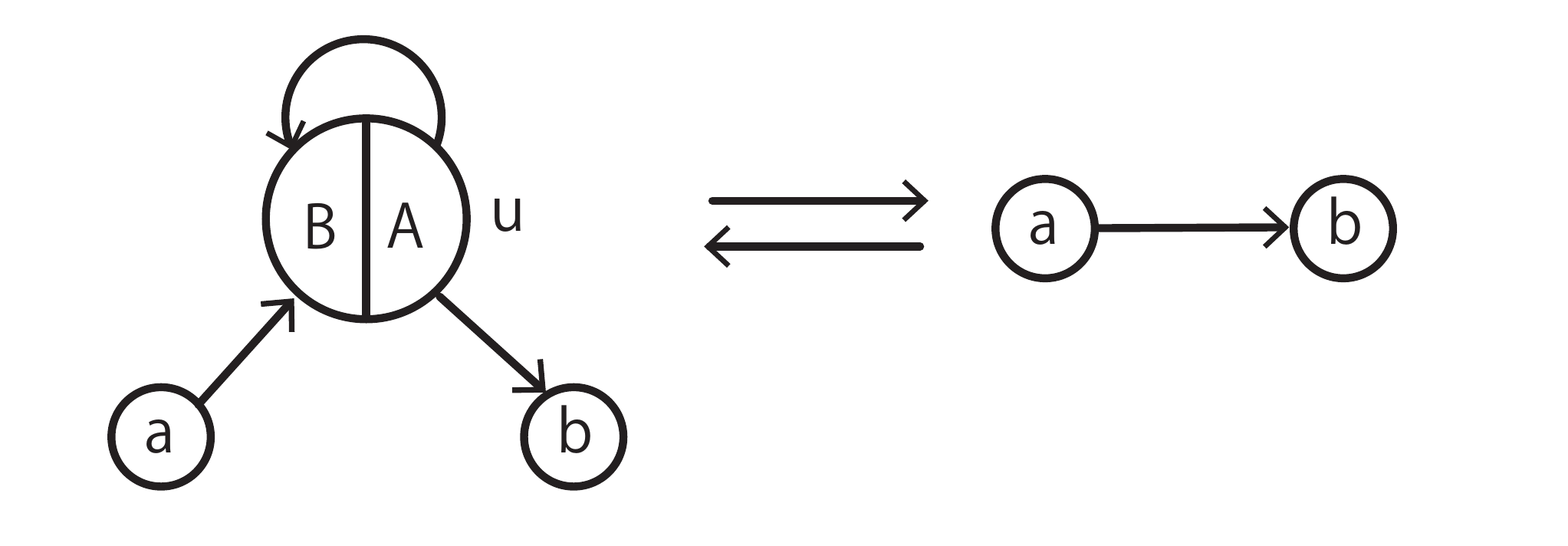}
\caption{$\ri^-$ (the left to the right) and its inverse (the right to the left)}\label{RI}
\end{figure}
\begin{figure}
\includegraphics[width=6cm]{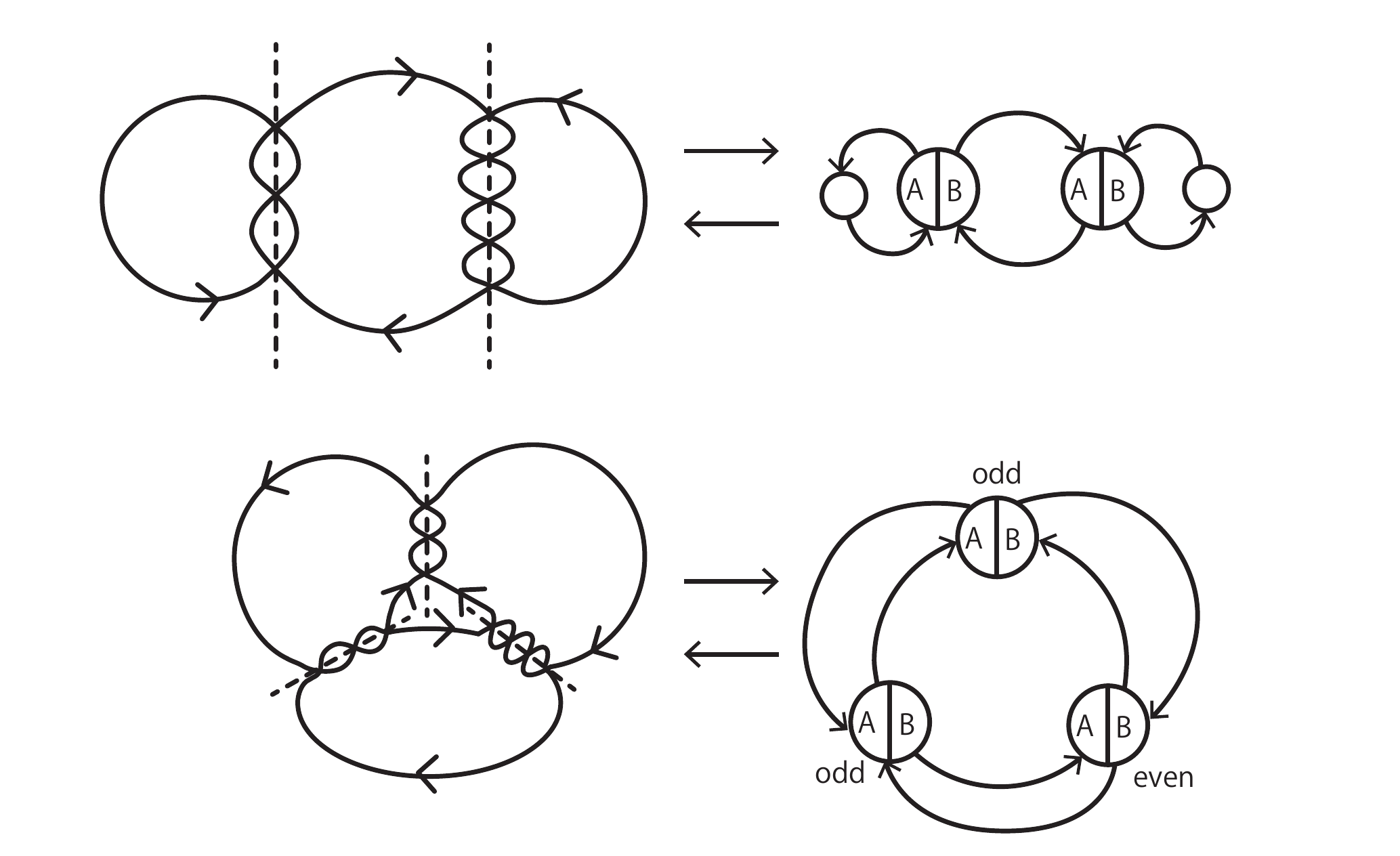}
\caption{Examples of knot Eulerian graphs.}\label{Step2}
\end{figure}
\begin{figure}
\includegraphics[width=4cm]{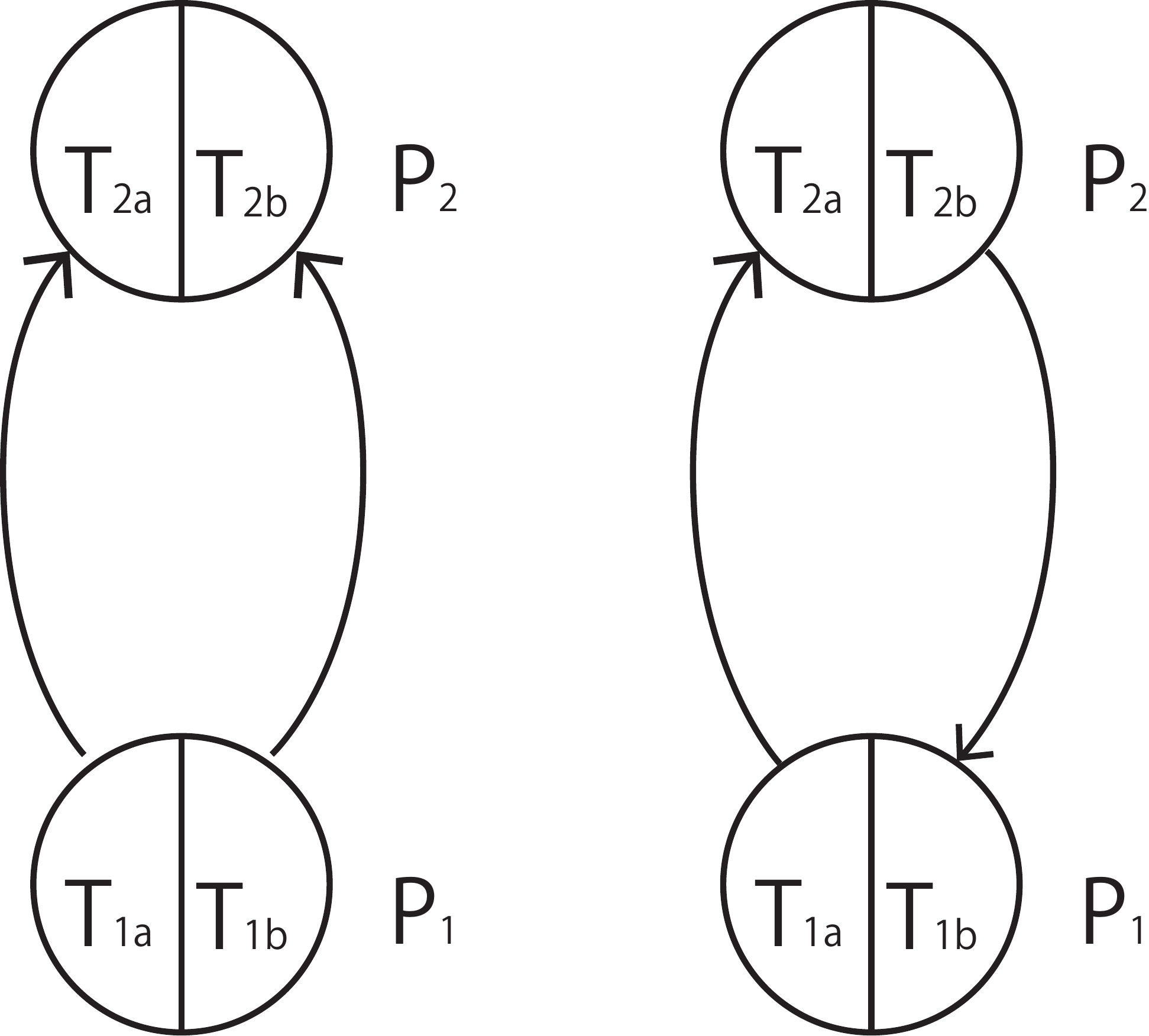}
\includegraphics[width=5cm]{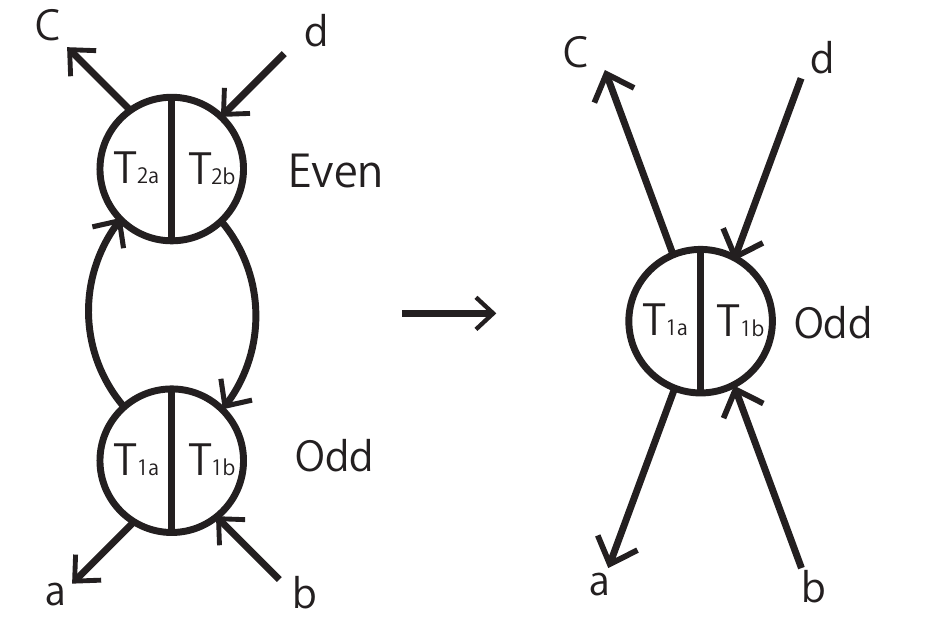}
\caption{}\label{Eg}
\end{figure}
After we complete the reduction for any knot Eulerian graph, the resulting knot projection is called a \emph{reduced} knot Eulerian graph.   
Here, we recall a result by Khovanov \cite{Khovanov1997} (cf.~\cite{ItoTakimura2013} for knot projections on a sphere).    
\begin{fact}[Khovanov \cite{Khovanov1997}]\label{ITLemma}
For a given knot projection $P$, we apply successive first  Reidemeister moves decreasing crossings until we have a knot projection, say $P^{1r}$, with no monogon.   Then, $P^{1r}$ is uniquely determined.   
\end{fact}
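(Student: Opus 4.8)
The plan is to treat the crossing-decreasing first Reidemeister moves as an abstract rewriting system on knot projections and to establish uniqueness of normal forms by combining termination with local confluence, in the spirit of Newman's lemma. First I would record termination: each crossing-decreasing RI move removes exactly one crossing, namely the self-crossing of a monogon (the left-to-right direction of Figure~\ref{RI}), so the crossing number strictly decreases along any chain of such moves. Since the crossing number is a nonnegative integer, every chain terminates, and a projection admits no further crossing-decreasing RI move precisely when it contains no monogon. Hence normal forms---projections with no monogon---exist; the content of the statement is that they are \emph{unique}.

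By Newman's lemma, for a terminating system the uniqueness of normal forms follows from local confluence: whenever a projection $P$ admits two crossing-decreasing RI moves producing $P_1$ and $P_2$, the results $P_1$ and $P_2$ must be reducible to a common projection. So the core of the argument is a local analysis of how two removable monogons in $P$ interact. Each monogon is a $1$-gon face, bounded by a single edge that forms an empty loop based at one crossing, and I would organize the analysis according to whether the two monogons sit at distinct crossings or at the same crossing.

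The main (and essentially only nontrivial) case is when the two monogons share their crossing. If the two monogons lie at distinct crossings $c_1 \ne c_2$, then the loop bounding each monogon is a single edge carrying no other crossing, so the two loops are disjoint edges and the untwisting at $c_1$ takes place in a neighborhood disjoint from $c_2$ and its loop; consequently removing $c_1$ leaves the monogon at $c_2$ intact (and vice versa), the two moves commute, and both $P_1$ and $P_2$ reduce in one further step to the common projection obtained by deleting both crossings. When instead both monogons are based at a single crossing $c$, both loops at $c$ are empty, which for a one-component projection forces the whole curve to be the figure-eight with one crossing; the single RI untwisting at $c$ then yields the crossingless circle $O$ regardless of which loop is viewed as the monogon, so $P_1 = P_2$. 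The hard part is thus not a deep computation but the careful bookkeeping that removing one monogon neither destroys the other nor is itself affected, which I would make precise by working in a small regular neighborhood of each monogon loop (on $S^2$, following \cite{ItoTakimura2013} for the spherical setting, so that no distinguished outer region interferes). With local confluence established in every case, Newman's lemma gives the uniqueness of $P^{1r}$, recovering Khovanov's result \cite{Khovanov1997}.
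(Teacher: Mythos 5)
Your argument is correct, but there is nothing in the paper to compare it against: the paper does not prove this statement at all. It is imported as a Fact, attributed to Khovanov \cite{Khovanov1997} (with \cite{ItoTakimura2013} cited for the spherical setting), and then used as a black box in the uniqueness proof for reduced knot Eulerian graphs. What you have written is therefore a self-contained substitute for a citation. Your route --- termination because each crossing-decreasing first Reidemeister move deletes exactly one crossing, then local confluence plus Newman's lemma --- is the standard rewriting-theoretic way to prove this kind of unique-normal-form statement, and your critical-pair analysis is essentially complete: monogons at distinct crossings are bounded by disjoint loop edges, so the two moves commute and both results reduce to a common projection, while two monogons based at the same crossing use up all four edge-ends there, forcing the figure-eight projection, on which either move yields the crossingless projection. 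One small point you gloss over: you should explicitly rule out the degenerate configuration in which a \emph{single} loop bounds monogon faces on both of its sides (two monogons at one crossing, but only one loop); this cannot occur, since the remaining two edge-ends at that crossing lie on one side of the loop, so the face on that side has more than one edge in its boundary and is not a monogon. With that remark added, your proof is valid; the trade-off is the obvious one --- the paper's citation keeps the exposition short and defers to the literature, whereas your argument makes Fact~\ref{ITLemma} verifiable without consulting \cite{Khovanov1997}, at the cost of a page of case analysis.
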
  
\section{Result of computations}\label{secComp}
The implementation (source code) is given by: 
 
\texttt{https://github.com/nanigasi-san/knot}  

(Please see \texttt{README.md} that includes informations of updates or debugs.)

This realizes (\ref{Y1}) in Section~\ref{CAid} and the judgement of equivalence of any two knot Eulerian graphs.  We have: 
\begin{proposition}
For any knot Eulerian graph, its  reduced knot Eulerian graph is   determined via  Steps~1--3 of Definition~\ref{ReKED} uniquely.  
\end{proposition}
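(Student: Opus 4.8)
The plan is to regard Steps~1--3 of Definition~\ref{ReKED} as a rewriting system on knot Eulerian graphs and to show that it has a \emph{unique normal form}; by the standard principle (Newman's lemma) it then suffices to establish two things, namely that the system \emph{terminates} and that the terminal object is \emph{independent of the choices} made during the reduction. For termination I would use the total number of vertices (real plus empty) as a monovariant: Step~1 strictly decreases this count by deleting every empty-vertex, each application of Step~2 (the move $\ri^-$ of Figure~\ref{RI}) deletes one real vertex together with its self-edge $(u,u,A,B)$, and each application of Step~3 replaces the two vertices $P_1,P_2$ by a single vertex $Q$ (Figure~\ref{Eg}) and hence lowers the count by one. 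Since the count is a nonnegative integer that strictly decreases at every non-vacuous operation, the algorithm halts after finitely many steps and a terminal (reduced) graph is always reached.

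For uniqueness I would route the argument through the underlying knot projection supplied by Proposition~\ref{PropKE}. The key observation is that Steps~1 and~3 do not alter the underlying knot projection up to planar isotopy: an empty-vertex is a bivalent marker that does not appear as a crossing, so Step~1 is invisible on the projection, while unifying $P_1$ and $P_2$ into $Q$ in Step~3 merely regroups two adjacent twisted regions into a single maximal twisted region and leaves the projected curve unchanged. By contrast, Step~2 is exactly a first Reidemeister move that removes a monogon and therefore decreases the number of crossings. Consequently, running the algorithm to its end produces a graph whose underlying projection has no monogon (Step~2 exhausted) and whose real vertices are precisely the maximal twisted regions (Step~3 exhausted), with all empty-vertices deleted (Step~1).

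I would then invoke Fact~\ref{ITLemma} (Khovanov): among all knot projections obtained from a given one by crossing-decreasing first Reidemeister moves, the monogon-free projection $P^{1r}$ is uniquely determined. Because every application of Step~2 realizes such a move and Steps~1 and~3 fix the projection, the underlying projection of any terminal graph coincides with $P^{1r}$, independently of the order in which the moves were performed. Finally, the canonical knot Eulerian graph associated with $P^{1r}$ --- real vertices given by its maximal twisted regions, equipped with their odd/even type and axis as in Proposition~\ref{PropKE}, and no empty-vertices --- is then forced, so the reduced knot Eulerian graph is unique.

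The main obstacle is the precise verification that Step~3 preserves the underlying projection and that its terminal graphs record exactly the maximal twisted-region decomposition: this is the combinatorial case analysis that Definition~\ref{ReKED} only illustrates by a single example and otherwise leaves to the reader. Concretely, one must confirm local confluence of Step~3 with itself and with Step~2, that is, that two simultaneously available unifications, or a unification competing with an $\ri^-$, can always be completed to the same graph, so that the projection-level uniqueness coming from Khovanov's theorem does indeed lift to the graph level. Once this case list is checked, termination together with this confluence yields the unique reduced knot Eulerian graph.
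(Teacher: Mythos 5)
Your proposal follows essentially the same route as the paper's own proof: both reduce uniqueness to Khovanov's result (Fact~\ref{ITLemma}) on the uniqueness of the monogon-free projection $P^{1r}$, after observing that Step~2 realizes crossing-decreasing first Reidemeister moves while Steps~1 and~3 leave the underlying projection unchanged, and both relegate the order-independence (confluence) of the Step~3 unifications to a combinatorial check left to the reader. Your additions --- the termination monovariant and the Newman's-lemma framing --- merely make explicit points the paper passes over silently, so the underlying argument is the same.
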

 \begin{proof}
 We will prove the uniqueness.  Let $G$ be a given knot Eulerian graph.  After removing empty vertices in Step~1, suppose that we have $G^{(1)}$.  Then we apply Step~2 to $G^{(1)}$ and have $G^{(2)}$.  Recalling Fact~\ref{ITLemma}, the underlying knot projection corresponding to $G^{(2)}$ is unique.  In order to remove ambiguities decompositions of vertices, we unify any extra vertices belonging to a twisting of an underlying knot projection into a single vertex.   Since by using the definition of digons, it is easy to show that we have the independency of orders of unifications of vertices.  The detail is left to the reader.    
\end{proof}

\section*{Acknowledgements}
The authors would like to thank Professor Mariko Okude for fruitful discussions.   The authors also would like to thank Dr.~ Keita Nakagane for giving us comments for an earlier version of this paper.    

\bibliographystyle{plain}
\bibliography{Ref}

\begin{thebibliography}{1}

\bibitem{Clark1978}
B.~E. Clark.
\newblock Crosscaps and knots.
\newblock {\em Internat. J. Math. Math. Sci.}, 1(1):113--123, 1978.

\bibitem{Hirasawa2002}
M.~Hirasawa.
\newblock Visualization of {A}'{C}ampo's fibered links and unknotting
  operation.
\newblock In {\em Proceedings of the {F}irst {J}oint {J}apan-{M}exico {M}eeting
  in {T}opology ({M}orelia, 1999)}, volume 121, pages 287--304, 2002.

\bibitem{ItoTakimura2013}
N.~Ito and Y.~Takimura.
\newblock {$(1,2)$} and weak {$(1,3)$} homotopies on knot projections.
\newblock {\em J. Knot Theory Ramifications}, 22(14):1350085, 14, 2013.

\bibitem{ItoTakimura2018}
N.~Ito and Y.~Takimura.
\newblock Crosscap number and knot projections.
\newblock {\em Internat. J. Math.}, 29(12):1850084, 21, 2018.

\bibitem{ItoTakimura2020}
N.~Ito and Y.~Takimura.
\newblock Crosscap number of knots and volume bounds.
\newblock {\em Internat. J. Math.}, 31(13):2050111, 33, 2020.

\bibitem{ItoTakimura2019}
N.~Ito and Y.~Takimura.
\newblock A lower bound of crosscap numbers of alternating knots.
\newblock {\em J. Knot Theory Ramifications}, 29(1):1950092, 15, 2020.

\bibitem{Khovanov1997}
M.~Khovanov.
\newblock Doodle groups.
\newblock {\em Trans. Amer. Math. Soc.}, 349(6):2297--2315, 1997.

\bibitem{Kindred2020}
T.~Kindred.
\newblock Crosscap numbers of alternating knots via unknotting splices.
\newblock {\em Internat. J. Math.}, 31(7):2050057, 30, 2020.

\bibitem{Sakuma1994}
M.~Sakuma.
\newblock Minimal genus {S}eifert surfaces for special arborescent links.
\newblock {\em Osaka J. Math.}, 31(4):861--905, 1994.

\end{thebibliography}
\end{document}